\newcommand\mytitle{Dirichlet forms for singular diffusion
on graphs} 
\newcommand\lhead{C. Seifert, J. Voigt}
\newcommand\rhead{Dirichlet forms on graphs}
\numberwithin{equation}{section}
\newtheorem{theorem}{Theorem}[section]
\newtheorem{corollary}[theorem]{Corollary}
\newtheorem{lemma}[theorem]{Lemma}
\theoremstyle{definition}
\newtheorem{remark}[theorem]{Remark}
\newtheorem{remarks}[theorem]{Remarks}
\newtheorem{example}[theorem]{Example}
 \mathchardef\ordinarycolon\mathcode`\:
\newcommand\llim{
\mathchoice{\vcenter{\hbox{${\scriptstyle{-}}$}}}
{\vcenter{\hbox{$\scriptstyle{-}$}}}
{\vcenter{\hbox{$\scriptscriptstyle{-}$}}}
{\vcenter{\hbox{$\scriptscriptstyle{-}$}}}}
\newcommand\rlim{
\mathchoice{\vcenter{\hbox{${\scriptstyle{+}}$}}}
{\vcenter{\hbox{$\scriptstyle{+}$}}}
{\vcenter{\hbox{$\scriptscriptstyle{+}$}}}
{\vcenter{\hbox{$\scriptscriptstyle{+}$}}}}
\newcommand\smid{\nonscript \mskip2mu plus2mu {\mid}%
\nonscript \mskip2mu plus2mu}     
\def\scpr(#1,#2){{(#1\smid#2)}}
\def\bigscpr(#1,#2){{\bigl(#1\nonscript \mskip2mu plus2mu \big|\nonscript \mskip2mu
plus2mu#2\bigr)}}
\newcommand\vtx{\gamma}
\newcommand\KEV{\K^{E_1'\cup V_1}}
\newcommand\rmc{{\rm c}}
\newcommand\trace{\operatorname{tr}}
\newcommand\strace{\operatorname{str}}
\newcommand\bv{\mathop{{BV}}\nolimits}
\newcommand\ran{R}
\newcommand\indic{\mathop{\bf 1}\nolimits}
\newcommand{\spt}{\operatorname{spt}}
\newcommand{\lin}{\operatorname{lin}}
\newcommand{\proj}{\operatorname{pr}}
\newcommand\imu{{\rm i}}
\renewcommand\phi{\varphi}
\renewcommand\epsilon{\varepsilon}
\newcommand\1{{\bf 1}}
\newcommand{\R}{\mathbb{R}\nonscript\hskip.03em}
\newcommand{\C}{\mathbb{C}\nonscript\hskip.03em}
\newcommand{\K}{\mathbb{K}\nonscript\hskip.03em}
\def\formE(#1,#2){\sum_{e\in E}\int_{a_e}^{b_e} #1_e'(x)\ol{#2_e'(x)}\,dx}
\let\qedhere@ams\qedhere
\def\qedhere{\@ifnextchar[{\@qedhere}{\qedhere@ams}}
\def\@qedhere[#1]{\tag*{\raisebox{-#1ex}{\qedhere@ams}}}
\def\env@cases{%
  \let\@ifnextchar\new@ifnextchar
  \left\lbrace
  \def\arraystretch{1.1}%
  \array{@{\,}l@{\quad}l@{}}%
}
\renewcommand\section{\@startsection {section}{1}{\z@}%
                                     {-3.25ex \@plus -1ex \@minus -.2ex}%
                                     {1.5ex \@plus.2ex}%
                                     {\normalfont\large\bfseries}}
\newcommand\restrict[1]{{\left.\vphantom{f}\vrule\right._{#1}}}
\newcommand\set[2]{\bigl\{#1{;}\;#2\bigr\}}
\newcommand\sset[2]{\{#1{;}\;#2\}}
\newcommand\ol{\overline}
\renewcommand\le{\leqslant}
\renewcommand\ge{\geqslant}
\renewcommand\geq{\geqslant}
\newcommand{\from}{{:}\;}
\renewcommand{\from}{\colon}
\newcommand\sse{\subseteq}
\newcommand\sms{\setminus}
\newcommand\cH{\mathcal{H}}
\newcommand\cHg{{\mathcal H_\Gamma}}
\newcommand\cHe{{\mathcal H_{E}}}
\newcommand{\abstracttext}{\noindent
We describe operators driving the time evolution of singular diffusion on
finite graphs whose vertices are allowed to carry masses. The operators are
defined by the method of quadratic forms on suitable Hilbert spaces. The model
also covers quantum graphs and discrete Laplace operators.

\vspace{8pt}

\noindent
MSC 2010: 47D06, 60J60, 47E05, 35Q99, 05C99
\vspace{2pt}

\noindent
Keywords: gap diffusion, quantum graph, Dirichlet form, $C_0$-semigroup,
positive, submarkovian
}
\begin{document}
\title{\mytitle}

\author{Christian Seifert and J\"urgen Voigt}

\date{}

\maketitle

\begin{abstract}
\abstracttext
\end{abstract}

\section*{Introduction}

The present paper is a continuation and extension of \cite{kkvw09}. We present
suitable boundary or glueing conditions on graphs (quantum graphs) with
singular second order differential operators on the edges. In particular, we
describe those
boundary
conditions leading to positive and submarkovian
$C_0$-semigroups.

The graph consists of finitely many bounded intervals, the edges, whose end
points are connected with the vertices of the graph. On each of the edges $e$ a
finite Borel measure $\mu_e$ is given, determining where particles may be
located. The particles  move according to ``Brownian motion'' but are slowed
down or accelerated by the ``speed measure'' $\mu_e$. Further, each of the
vertices $v$ is provided with a weight $\mu_v\ge0$, and particles may also be
located at those vertices $v$ with~$\mu_v>0$.

The motivations for the treatment in \cite{kkvw09} were twofold. 
The first issue
was to treat singular diffusion, including gap diffusion, on the
edges of the graph, in the framework of Dirichlet forms. 
The second
aim was to describe glueing conditions on the vertices, in the spirit of
\cite{kuc04}, and investigate conditions under which the associated 
self-adjoint operator gives rise to a positive or submarkovian $C_0$-semigroup.

In the present paper, the
extension with respect to \cite{kkvw09} consists in two issues. On the one
hand, the boundary conditions we describe are more general than 
glueing conditions. By glueing conditions or ``local boundary
conditions'', we understand conditions where, for a given vertex, only the
values of a function on adjacent edges and on the vertex itself can interact. In
our treatment in Sections~\ref{singDir} and \ref{operator}, however, the graph
structure does not
intervene at all, and we only
specify later the case of local boundary conditions, in Section \ref{lbc}. On
the other hand, we include the general case of vertices with masses, whereas in
\cite[Section 4]{kkvw09} only a special case was treated.
These results have been obtained in \cite{sei09}.
\medskip

The ultimate objective of the treatment is to obtain a semi-bounded (below)
self-adjoint operator $H$ on a Hilbert space $\cH_\Gamma$ over the graph 
$\Gamma$ which
can then be used in the initial value problem for
the diffusion equation or heat equation
\begin{align}\label{diffequ}
u'=-Hu,
\end{align}
thus governing the time evolution of a process, i.e.,
giving rise to a $C_0$-semigroup on $\cH_\Gamma$. For this equation 
it is of interest to obtain $H$ in such a way that the associated
$C_0$-semigroup is positive or submarkovian. The self-adjointness of $H$ is also
of interest for the initial value problem for the Schr\"odinger equation
\[
u'=-\imu Hu.
\]

The part of the operator $H$ acting on an edge $e$ is of the form
$(Hf)_e=-\partial_{\mu_e}\partial f_e$, where $\partial_{\mu_e}$ is the
derivative with respect to $\mu_e$; cf.\ Section~\ref{odp}. The domain of $H$ is
restricted by conditions
on the 
boundary values of the functions on the edges
and the values
at
the vertices.

The Hilbert space $\cH_\Gamma$ is given by
\[
\cH_\Gamma=\bigoplus_{e\in E} L_2([a_e,b_e],\mu_e)\oplus \K^V, 
\]
where $E$ is the set of edges, the interval $[a_e,b_e]$ corresponds to 
the edge $e$,
and $V$ is the set of vertices; cf.\ Section~\ref{singDir} for more details. The
operator $H$ is obtained by the method of forms. Avoiding all technicalities
(which will be given in Section~\ref{singDir}), the form $\tau$ giving rise to
$H$ is of the form
\[
\tau(f,g)=\sum_{e\in E} \int_{a_e}^{b_e}f'_e(x)\ol{g'_e(x)}\,dx
                               +\scpr(L\trace f,\trace g),
\]
with domain
\[
D(\tau)=\set{f\in\ldots}{\trace f\in X}.
\]		       
Here, $\trace f$ denotes the boundary values of $f$ 
on the edges and the values of $f$ on the vertices, $X$ is a subspace of the set
of possible boundary values and values on the vertices, and $L$ is a
self-adjoint operator (matrix) on $X$. The boundary conditions for functions in 
the domain of $H$ are encoded in the space $X$ as well as in the operator $L$;
cf. Theorem~\ref{thm-operator}. Our treatment includes the case that some of the
edges or vertices may have weight zero.

For the discussion of positivity and the submarkovian property in connection
with equation \eqref{diffequ} we use the Beurling-Deny criteria for $\tau$.
These yield the result that the subspace $X$ should satisfy lattice properties
and $L$ should satisfy positivity properties; 
cf.~Theorem~\ref{pos-subm}.

The investigations mentioned so far did not take into account the graph
structure of $\Gamma$. In the description of glueing conditions, allowing only
interactions between vertices and adjacent edges, the space $X$ and the
operator $L$ decompose into
parts corresponding to single vertices; cf.
Corollaries~\ref{cor-local1} and \ref{cor-local2}.

\medskip
In Section~\ref{odp} we recall some notation and facts from the one-dimensional
case on an interval. In Section~\ref{singDir} we define the form in the Hilbert
space $\cHg$ on the graph which then defines the operator driving the evolution.
We show that the defined form $\tau$ constitutes a form that is bounded
below and closed. Let us point out that our definition of the form looks
somewhat different from the one given in \cite[Section~3]{kkvw09}. In fact,
looking at the definition of $\tau$ in \cite[Section 3]{kkvw09}, one realises
that there is some interpretation needed in order to understand $D(\tau)$ as a
subset of the Hilbert space $\cHg$. This interpreation is made explicit in the
present paper by the use of the mapping $\iota$ introduced in
Sections~\ref{odp} and~\ref{singDir}. In Section~\ref{operator} we describe the
operator $H$ associated with the form $\tau$ (Theorem~\ref{thm-operator}). In
Section~\ref{pos-con} we indicate conditions for the $C_0$-semigroup
$(e^{-tH})_{t\ge0}$ to be positive and submarkovian. In Section~\ref{lbc} we
describe
the case of local boundary conditions.

\section{One-dimensional prerequisites}
\label{odp}

In order to define the classical Dirichlet form we have to recall some notation
and facts for a single interval $[a,b]\sse\R$, where $a,b\in\R$, $a<b$. Let
$\mu$ be a finite Borel measure on $[a,b]$, $a,b\in\spt\mu$, $\mu(\{a,b\})=0$. 
Our function spaces will consist of $\K$-valued functions, where 
$\K\in\{\R,\C\}$.
We define
\begin{align*}
 &C_\mu[a,b]:=\set{f\in C([a,b]}{f\text{ affine linear on the components of
}[a,b]\sms\spt\mu},\\
&W^1_{2,\mu}(a,b):= W^1_2(a,b)\cap C_\mu[a,b].
\end{align*}
For later use we recall the following inequalities. There exists a
constant $C>0$ such that
\begin{align}\label{sob-ine1}
\|f\|_\infty\le C\big(\|f'\|^2_{L_2(a,b)} +
\|f\|^2_{L_2([a,b],\mu)}\big)^{1/2}
\end{align}
for all $f\in W^1_2(a,b)\cap C[a,b]$, and for all $r\in(0,b-a]$ one has
\begin{align}\label{sob-ine2}
 |f(a)| 
\le r^{1/2}\|f'\|_{L_2(a,a+r)}+
			    \|f\|_{L_2([a,a+r],\mu)}\,\mu([a,a+r])^{-1/2},
\end{align}
and correspondingly for $b$; cf. \cite[Lemma 1.4 and Remark 3.2(b)]{kkvw09}.

Let $\kappa\colon W^1_2(a,b)\cap C[a,b]\to L_2([a,b],\mu)$ be defined by
$\kappa f:=f$. Then it can be shown that $\ran(\kappa)=\ran(\kappa\restrict
{W^1_{2,\mu}(a,b)})$ (cf. \cite[Lemma 1.2]{kkvw09}), and that $\kappa\restrict
{W^1_{2,\mu}(a,b)}$ is injective (cf. \cite[lower part of p.\,639]{kkvw09}). We
define $\iota:=\big(\kappa\restrict{W^1_{2,\mu}(a,b)}\big)^{-1}$. Thus, $\iota$
is an operator from $L_2([a,b],\mu)$ to $W^1_{2,\mu}(a,b)$,
\begin{align*}
 D(\iota)=\set{f\in L_2([a,b],\mu)}{\text{there exists }&g\in W^1_{2}(a,b)\cap
C[a,b] \\&\text{ such that } g=f\ \mu\text{-a.e.}},
\end{align*}
and $\iota f$ is the unique element $g\in W^1_{2,\mu}(a,b)$ such that $g=f$
$\mu$-a.e.
\medskip

In order to describe the operator associated with the form defined in the
following section we need some additional notions and facts concerning 
derivatives with respect to $\mu$.

If $f\in L_{1,{\rm loc}}(a,b)$, 
$g\in L_1([a,b],\mu)$ are such that $f'=g\mu$ 
(where $f'=\partial f$ denotes the distributional derivative of $f$), then
we call $g$ \emph{distributional derivative of $f$ with respect to $\mu$}, and
we
write
\[
\partial_\mu f:=g.
\]
Note that then necessarily $f'=0$ on $[a,b]\setminus\spt\mu$, i.e., $f$ is
constant on each of the components of $[a,b]\setminus\spt\mu$.
It is easy to see that this definition is equivalent to
\begin{align}\label{eq11}
f(x)=c+\int_{(a,x)}g(y)\,d\mu(y)\quad \text{a.e.},
\end{align}
with some $c\in\K$. 
Thus, the function $f$ has representatives of bounded variation and these have
one-sided limits (not depending on the representative) at all points of $[a,b]$.

\section{The form on the graph}
\label{singDir}

Let $\Gamma = (V,E,\vtx)$ be a finite directed graph. 
This means that $V$ and $E$ are finite
sets, $V\cap E=\varnothing$, $V$ is the set of \emph{vertices} (or \emph{nodes})
of
$\Gamma$, $E$ the set of \emph{edges}, and $\vtx=(\vtx_0,\vtx_1)\from E\to
V\times V$ associates with each edge $e$ a ``starting vertex'' $\vtx_0(e)$, and
an ``end vertex'' $\vtx_1(e)$.

We assume that each edge $e\in E$ corresponds to an interval $[a_e,b_e]\sse\R$
(where $a_e,b_e\in\R$, $a_e<b_e$), and we assume that $\mu_e$ is a 
finite Borel measure on $[a_e,b_e]$ satisfying either $\mu_e=0$ or else
$a_e,b_e\in\spt\mu_e$, $\mu_e(\{a_e,b_e\})=0$. We denote
\[
 E_0:=\sset{e\in E}{\mu_e=0},\qquad E_1:=E\setminus E_0.
\]

We further assume that, for each $v\in V$, we are given a weight $\mu_v\ge0$,
and we define
\[
 V_0:=\set{v\in V}{\mu_v=0},\qquad V_1:=V\setminus V_0.
\]

\begin{remark}
The sets $E_1$ and $V_1$ encode
the parts of the graph $\Gamma$, where a particle driven by the
diffusion can be localised.
In the present section we describe general glueing conditions which do not take
into account the correspondence of the edges to the vertices. In the case 
$E_1 = E$, $V_1=\varnothing$ and $\mu_e$ the Lebesgue measure on
$[a_e,b_e]$, the model will describe quantum graphs; cf.\ \cite{kps08},
\cite{kuc04}, \cite{kuc08}.
In the case $E_1 = \varnothing$ we obtain (weighted) discrete diffusion on
the vertices; cf.\cite{chlu06}.
\end{remark}

We are going to describe the self-adjoint operator driving the evolution in the
Hilbert space
\[
 \cHg:=\cHe\oplus \K^{V_1},
\]
where on
\[
 \cHe:=\bigoplus_{e\in E_1}L_2([a_e,b_e],\mu_e)
\]
we use the scalar product
\[
\scpr((f_e)_{e\in E_1},(g_e)_{e\in E_1})_\cHg:=\sum_{e\in
E_1}\scpr(f_e,g_e)_{L_2([a_e,b_e],\mu_e)},
\]
and on $\K^{V_1}$ we use the scalar product
\[
\scpr((f_v)_{v\in V_1},(g_v)_{v\in V_1})_\cHg:= \sum_{v\in V_1}
f_v\ol{g_v}\,\mu_v
\]
(for $f=\big((f_e)_{e\in E_1},(f_v)_{v\in V_1}\big),g=\big((g_e)_{e\in
E_1},(g_v)_{v\in V_1}\big)\in\cHg$).

In the following, the mapping $\iota$ defined in Section \ref{odp} will be
applied in the situation of the
edges $e\in E_1$, and will then be denoted by $\iota_e$.
We then define the operator $\iota$ from $\cHg$ to
$
 \prod_{e\in E_1}W^1_{2,\mu_e}(a_e,b_e)\times \K^{V_1},
$
by
\begin{align*}
 D(\iota)&:=\set{f\in\cHg}{f_e\in D(\iota_e)\ (e\in E_1)},\\
(\iota f)_e&:=\iota_ef_e\qquad(e\in E_1),\\
(\iota f)_v&:=f_v         \qquad(v\in V_1).
\end{align*}

We define the \emph{trace mapping} (or \emph{boundary value
mapping}) $\trace\colon\prod_{e\in E_1}C[a_e,b_e]\times\K^{V_1} \to\KEV$, where
$E_1':=E_1\times\{0,1\}$, by
\begin{align*}
 \trace f(e,j)&:=
\begin{cases}
 f_e(a_e)  &\text{if }e\in E_1,\ j=0,\\
 f_e(b_e)  &\text{if }e\in E_1,\ j=1,
\end{cases}\\
 \trace f(v)&:= f_v\qquad(v\in V_1).
\end{align*}
The space $\KEV$ will be provided with the scalar product
\[
 \scpr(\xi,\eta):=\sum_{(e,j)\in E_1'}\xi(e,j)\ol{\eta(e,j)}
+\sum_{v\in V_1}\xi(v)\ol{\eta(v)}\,\mu_v.
\]

For the definition of the form we assume that $X$ is a subspace of
$\KEV$ and that $L$ is a self-adjoint operator in $X$. Then we define the
form
$\tau$ by
\begin{align*}
 D(\tau)&:=\set{f\in D(\iota)}{\trace(\iota f)\in X},\\
\tau(f,g)&:=\sum_{e\in
E_1}\int_{a_e}^{b_e}(\iota_ef_e)'(x)\ol{(\iota_eg_e)'(x)}\,dx
           +\bigscpr(L\trace(\iota f),{\trace(\iota g)}).
\end{align*}

\begin{remark}
The subspace $X$ encodes boundary conditions for the elements of $D(\tau)$. One
would expect boundary conditions to be in the form of some equation for
$\trace(\iota f)$. Of course, 
if $P$ denotes the orthogonal projection from $\KEV$ onto $X^\bot$, 
then $D(\tau) = \set{f\in D(\iota)}{P\trace(\iota f)=0}$.

Further boundary conditions for the elements of the associated operator $H$ are
encoded in the operator $L$; we refer to the description of $H$ in
Theorem~\ref{thm-operator}.
\end{remark}

\begin{lemma}\label{form-elem}
 The form $\tau$ defined above is symmetric. $D(\tau)$ is dense if and only if 
\begin{align}\label{dense}
 \proj_{V_1}(X)=\K^{V_1},
\end{align}
where $\proj_{V_1}$ denotes the canonical projection
$\proj_{V_1}\colon\KEV\to\K^{V_1}$.
\end{lemma}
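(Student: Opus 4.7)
Symmetry of $\tau$ is immediate: swapping $f,g$ and conjugating leaves each edge integral unchanged, while $\bigscpr(L\trace(\iota f),\trace(\iota g))$ is conjugate-symmetric because $L$ is self-adjoint on $X$. I would dispose of this in one line.

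The key observation for the density statement is that the continuous orthogonal projection $\proj_{V_1}\colon\cHg\to\K^{V_1}$ satisfies $\proj_{V_1}(D(\tau))=\proj_{V_1}(X)$. The inclusion ``$\sse$'' is immediate from $\trace(\iota f)\in X$; for ``$\supseteq$'', given $\xi\in X$ I would realise $\trace(\iota f)=\xi$ by taking $\iota_e f_e$ to be the affine function with boundary values $\xi(e,0),\xi(e,1)$ on each edge and setting $f_v:=\xi(v)$. With this equality in hand, the ``only if'' direction falls out: density of $D(\tau)$ forces $\proj_{V_1}(X)$ to be dense in $\K^{V_1}$, and being finite-dimensional (hence closed) it must equal $\K^{V_1}$.

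For the ``if'' direction, starting from $f=((f_e),(f_v))\in\cHg$ I would pick $\xi\in X$ with $\proj_{V_1}(\xi)=(f_v)$ and, on each edge, seek an approximation of the form $\iota_e f^n_e = h_e + g^n_e$, where $h_e\in W^1_{2,\mu_e}(a_e,b_e)$ is the affine function matching the prescribed boundary values $\xi(e,0),\xi(e,1)$, and $g^n_e\in W^1_{2,\mu_e}(a_e,b_e)$ vanishes at $a_e,b_e$ and satisfies $g^n_e\to f_e-h_e$ in $L_2([a_e,b_e],\mu_e)$. Setting $f^n_v:=f_v$ then yields $f^n\in D(\tau)$ with $f^n\to f$ in $\cHg$.

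The main obstacle is the underlying density claim that $\set{g\in W^1_{2,\mu_e}(a_e,b_e)}{g(a_e)=g(b_e)=0}$ is dense in $L_2([a_e,b_e],\mu_e)$. I would prove this by starting from $\phi\in C_c^\infty((a_e,b_e))$, which is dense in $L_2([a_e,b_e],\mu_e)$ since $\mu_e(\{a_e,b_e\})=0$, invoking the identity $\ran(\kappa_e)=\ran(\kappa_e\restrict{W^1_{2,\mu_e}(a_e,b_e)})$ recalled in Section~\ref{odp} to pick $\tilde\phi\in W^1_{2,\mu_e}(a_e,b_e)$ with $\tilde\phi=\phi$ $\mu_e$-a.e., and then noting that since $\phi$ vanishes in a neighbourhood of $a_e$ while $a_e\in\spt\mu_e$, continuity of $\tilde\phi$ forces $\tilde\phi(a_e)=0$, and analogously at $b_e$.
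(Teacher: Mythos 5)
Your proof is correct and takes essentially the same route as the paper's: the ``only if'' direction projects the dense set $D(\tau)$ onto $\K^{V_1}$ and uses that finite-dimensional subspaces are closed, while the ``if'' direction combines affine interpolation realising prescribed trace values in $X$ with the density in $L_2([a_e,b_e],\mu_e)$ of functions vanishing at the endpoints (the paper uses $C^1_\rmc(a_e,b_e)$ here, relying on the same $\spt\mu_e$ argument you spell out to see that their traces vanish). No gaps.
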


\begin{proof}
The symmetry of $\tau$ is obvious.

Assume that $D(\tau)$ is dense. The image of the dense set $D(\tau)$ under the
orthogonal projection 
\[
 \proj_2\colon \cHg\to\K^{V_1}
\]
is dense in $\K^{V_1}$, and therefore is equal to $\K^{V_1}$. From the
definition of $D(\tau)$ it follows that $\proj_2(D(\tau))$ is contained in
$\proj_{V_1}(X)$, and therefore $\proj_{V_1}(X)=\K^{V_1}$.

Now assume that \eqref{dense} holds. For $v\in V_1$ let $\xi^v\in X$ be such
that
$\xi^v(v)=1$ and $\xi^v(w)=0$ for all $w\in V_1\sms\{v\}$. Let $g^v\in D(\iota)$
be
defined by $\trace(\iota g^v)=\xi^v$, and $g^v$ affine linear on the edges. The
affine linear interpolation of the prescribed boundary values evidently
yields an element of $g^v\in D(\tau)$. 

Let $f\in\cHg$, and define
\[
 \tilde f:=f-\sum_{v\in V_1}f_vg^v.
\]
Then $\tilde f_v=0$ for all $v\in V_1$. Because $C_\rmc^1(a_e,b_e)$ is dense in
$L_2([a_e,b_e],\mu_e)$ ($e\in E_1$), the function $\tilde f$ can be
approximated by functions in
\[
 D_\rmc:=\set{f\in D(\tau)}{f_e\in C_\rmc^1(a_e,b_e)\ (e\in E_1),\ f_v=0\ (v\in
V_1)}.
\]
Therefore $f$ can be approximated by functions in
\[
D_\rmc+\sum_{v\in V_1}f_vg^v\sse D(\tau). \qedhere
\]
\end{proof}

\begin{remarks}\label{form-rems}
(a) For the special case that $X=\KEV$ and $L=0$ we denote the
corresponding form by $\tau_{\rm N}$ (the index ${\rm N}$ indicating Neumann
boundary conditions). The form $\tau_{\rm N}$ decomposes as the sum of the
Neumann forms on each of the edges and the null form on $\K^{V_1}$.
Therefore the closedness of $\tau_{\rm N}$ follows from the closedness in the
one-dimensional cases; cf.\ \cite[Section 1 and Remark 3.2]{kkvw09}.

(b) Condition \eqref{dense} did not occur in the previous treatment
\cite{kkvw09}. The reason is that it is obviously satisfied if the vertices do
not have masses, i.e.~$V_1=\varnothing$. Also, in the case of vertices with
masses, but with local boundary conditions of continuity (see
Example~\ref{lastex}), condition \eqref{dense} is automatically satisfied.
\end{remarks}

\begin{theorem}\label{form-closed}
 The form $\tau$ defined above is bounded below and closed.
\end{theorem}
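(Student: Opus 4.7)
The plan is to realize $\tau$ as an infinitesimally form-bounded perturbation of a restriction of the Neumann form $\tau_{\rm N}$ of Remark~\ref{form-rems}(a). Write $\tau = \tau_0 + p$, where $\tau_0 := \tau_{\rm N}\restrict{D(\tau)}$ is the gradient part with domain restricted to $D(\tau)$, and $p(f,g) := \bigscpr(L\trace(\iota f),\trace(\iota g))$. The form $\tau_{\rm N}$ is non-negative and closed by Remark~\ref{form-rems}(a); if $\tau_0$ is still closed and $p$ is $\tau_0$-form-bounded with relative bound $<1$, the KLMN theorem yields that $\tau$ is closed and bounded below.

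For the closedness of $\tau_0$ it suffices to show that $D(\tau)$ is closed in $D(\tau_{\rm N})$ with respect to the $\tau_{\rm N}$-form norm. Since $X\sse\KEV$ is finite-dimensional and hence closed, this reduces to the continuity of $\trace\circ\iota\colon D(\tau_{\rm N}) \to \KEV$ in the form norm. On each edge $e\in E_1$ this follows from \eqref{sob-ine1} applied to $\iota_ef_e\in W^1_2(a_e,b_e)\cap C[a_e,b_e]$, together with $\iota_ef_e=f_e$ $\mu_e$-almost everywhere; on vertices $v\in V_1$ we use the trivial bound $|f_v|\le \mu_v^{-1/2}\|f\|_{\cHg}$.

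For form-boundedness of $p$ with relative bound zero, note that $L$ is bounded (since $X$ is finite-dimensional), so it suffices to control $\|\trace(\iota f)\|^2_{\KEV}$ by an arbitrarily small multiple of $\tau_{\rm N}(f)$ plus a constant multiple of $\|f\|^2_{\cHg}$. The decisive estimate is \eqref{sob-ine2}: squaring and using $(s+t)^2\le 2s^2+2t^2$ gives, for any $r\in(0,b_e-a_e]$,
\[
|(\iota_ef_e)(a_e)|^2 \le 2r\,\|(\iota_ef_e)'\|^2_{L_2(a_e,a_e+r)} + 2\mu_e([a_e,a_e+r])^{-1}\,\|f_e\|^2_{L_2([a_e,a_e+r],\mu_e)},
\]
and analogously at $b_e$. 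Given $\varepsilon>0$, one chooses $r$ small enough (edge by edge); summing over $e\in E_1$ and adding $\sum_{v\in V_1}|f_v|^2\mu_v\le \|f\|^2_{\cHg}$ yields
\[
\|\trace(\iota f)\|^2_{\KEV} \le \varepsilon\,\tau_{\rm N}(f) + C_\varepsilon\,\|f\|^2_{\cHg},
\]
with $C_\varepsilon<\infty$ since the standing assumption $a_e,b_e\in\spt\mu_e$ guarantees $\mu_e([a_e,a_e+r])>0$ for every $r>0$.

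The main obstacle is exactly this last step: $\mu_e$ may put very little mass near the endpoints, so $C_\varepsilon$ can blow up badly as $\varepsilon\to 0$; the key point is that for any fixed $\varepsilon$ one only needs strict $\mu_e$-positivity of small endpoint neighbourhoods, which is what $a_e,b_e\in\spt\mu_e$ supplies. Once the infinitesimal trace estimate is in place, the KLMN perturbation theorem gives both closedness and semi-boundedness of $\tau$ on $D(\tau)$.
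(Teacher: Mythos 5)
Your proposal is correct and follows essentially the same route as the paper: decompose $\tau$ into the Neumann part plus the trace perturbation, use \eqref{sob-ine2} to show the edge traces are infinitesimally form-small relative to $\tau_{\rm N}$ (with the vertex part trivially bounded), and deduce closedness from the continuity of $f\mapsto\trace(\iota f)$ on $D_{\tau_{\rm N}}$ together with the closedness of $X$. Your explicit appeal to the KLMN theorem is just a more formal packaging of the same perturbation argument.
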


\begin{proof}
For $f\in D(\tau)$ we obtain the estimate
\[
 \tau(f)=\tau_{\rm N}(f)+\bigscpr(L\trace(\iota f),{\trace(\iota f)})
\ge \tau_{\rm N}(f) - \|L\||\trace(\iota f)|^2
\]
(with $\tau_{\rm N}$ defined in Remark~\ref{form-rems}(a)).
From 
inequality \eqref{sob-ine2} we obtain that the mapping
$f\mapsto\trace(\iota f)\restrict{E_1'}$ is infinitesimally form small with
respect to $\tau_{\rm N}$. The remaining part of the trace,
$f\mapsto\trace(\iota f)\restrict{V_1}$, is bounded. These observations imply
that $\tau$ is bounded below and the that the embedding $D_{\tau_{\rm N}}\ni
f\mapsto\iota f\in(\prod_{e\in E_1}C[a_e,b_e]\times\K^{V_1},\|\cdot\|_\infty)$
is continuous. (Here, $D_{\tau_{\rm
N}}$ denotes $D(\tau_{\rm N})$, provided with the form norm.)

In order to obtain that $\tau$ is closed it is sufficient to show that
$D(\tau)$ is a closed subset of $D_{\tau_{\rm N}}$. This, however, is
immediate from the continuity of the mapping $D_{\tau_{\rm N}}\ni
f\mapsto\trace(\iota f)\in\KEV$ (and the fact that $X$ is a closed
subspace of $\KEV$).
\end{proof}

\section{The operator $H$ associated with the form $\tau$}
\label{operator}

We assume that the notation and the hypotheses are as in the previous section,
and that \eqref{dense} holds.

Besides the trace mapping defined in the previous section we also need the
\emph{signed trace} (or \emph{signed boundary values}) 
\[
 \strace\colon\prod_{e\in E_1}\bv(a_e,b_e)
\to \K^{E_1'}\sse \KEV
\]
(where $\bv(a_e,b_e)$ denotes the set of functions of bounded variation, with
equivalence of functions coinciding a.e.),
defined by
\[
 \strace f(e,j):=
\begin{cases}
 f_e(a_e\rlim) &\text{if } e\in E_1,\ j=0,\\
 -f_e(b_e\llim) &\text{if } e\in E_1,\ j=1.
\end{cases}
\]
The inclusion $\K^{E_1'}\sse \KEV$ is to be understood in the canonical sense;
we want to be able to use $\strace f$ also as an element of $\KEV$.

For the description of the self-adjoint operator $H$ associated with the form
$\tau$ we use a maximal operator $\hat H$ for the differential part of the
form. With the notation described in Section \ref{odp}, we define
\begin{align*}
&D(\hat H):=\set{f\in\smash{\prod_{e\in E_1}}D(\iota_e)}
  {(\iota_ef_e)'\in L_1(a_e,b_e),\ \partial_{\mu_e}(\iota_ef_e)'\text{
exists},\\
 &\hphantom{D(\hat H):=f\in\prod_{e\in E_1}D(\iota_e}\qquad\qquad
\partial_{\mu_e}(\iota_ef_e)'\in L_2([a_e,b_e],\mu_e)\
(e\in E_1)},\\
&\hat Hf:=(-\partial_{\mu_e}(\iota_ef_e)')_{e\in E_1}\qquad(f\in D(\hat H)).
\end{align*}
Thus, for $f\in D(\hat H)$, the signed trace $\strace ((\iota_ef_e)')_{e\in
E_1}$ exists, and it describes the ``ingoing derivatives'' from the endpoints
of the intervals. It is to be understood that for $(\iota_ef_e)'$ we choose
representatives of bounded variation (which exist by the
explanation given at the end of Section \ref{odp}), in order to be able to apply
the signed trace mapping.

Let
\[
 X_0:=\set{x\in X}{\proj_{V_1}x=0},
\]
which could also be expressed as $X_0:=X\cap \K^{E_1'}$ (with our understanding
of $\K^{E_1'}$ as a subspace of $\KEV$), and let $Q_0$ be the
orthogonal projection from $\KEV$ onto $X_0$. Also, for $v\in V_1$, let
$\xi^v\in X$ be such
that $\xi^v\restrict{V_1}=\1_{\{v\}}$ (see the proof of Lemma~\ref{form-elem}).

In the following, for $f\in D(\iota)$ we will use the shorthand notation
$(\iota f)':=\big((\iota_ef_e)'\big)_{e\in E_1}$.

\begin{theorem}\label{thm-operator}
 The operator $H$ associated with the form $\tau$ is given by
\begin{align*}
 &D(H)=\set{f\in\cHg}{(f_e)_{e\in E_1}\in D(\hat H),\ \trace (\iota f)\in X,\\
 &\hphantom{D(H)=f\in\cHg{(f_e)_{e\in E_1}\in D(\hat H)}}
Q_0\strace(\iota f)'=Q_0L\trace(\iota f)},\\
 &((Hf)_e)_{e\in E_1}=\hat H(f_e)_{e\in E_1},\\
 &(Hf)_v=\frac1{\mu_v}\bigscpr(L\trace(\iota f)-\strace(\iota f)',\xi^v)
\qquad(v\in V_1).
\end{align*}
\end{theorem}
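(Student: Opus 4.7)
The strategy is to use the standard characterisation that $f\in D(H)$ iff $f\in D(\tau)$ and there exists $h\in\cHg$ with $\tau(f,g)=(h|g)_{\cHg}$ for every $g\in D(\tau)$, in which case $Hf=h$. The three conditions in the theorem are read off by successively testing against three families of $g$, and the converse is then obtained by reversing the computation.

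For the edge part, take $g\in D(\tau)$ with $g_e\in C^1_\rmc(a_e,b_e)$ for all $e\in E_1$ and $g_v=0$; then $\trace(\iota g)=0$, the $L$-term of $\tau$ drops out, and the defining identity reduces, edge by edge, to $\int_{a_e}^{b_e}(\iota_ef_e)'\overline{g_e'}\,dx = \int_{a_e}^{b_e} h_e\overline{g_e}\,d\mu_e$. By the very definition of $\partial_{\mu_e}$ recalled in Section~\ref{odp}, this is $\partial_{\mu_e}(\iota_ef_e)'=-h_e$ in $L_2([a_e,b_e],\mu_e)$, so $(f_e)_{e\in E_1}\in D(\hat H)$ and the edge formula $((Hf)_e)_{e\in E_1}=\hat H(f_e)_{e\in E_1}$ holds.

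For the boundary part, use that $(\iota_ef_e)'$ now has a bounded-variation representative, so edgewise integration by parts yields
\[
 \int_{a_e}^{b_e}(\iota_ef_e)'\overline{(\iota_eg_e)'}\,dx = -\int_{a_e}^{b_e}\partial_{\mu_e}(\iota_ef_e)'\overline{g_e}\,d\mu_e + (\iota_ef_e)'(b_e\llim)\overline{g_e(b_e)} - (\iota_ef_e)'(a_e\rlim)\overline{g_e(a_e)}
\]
for arbitrary $g\in D(\tau)$. Summing over $e\in E_1$, the boundary contributions collapse into $-\bigscpr({\strace(\iota f)'},{\trace(\iota g)})$, while the bulk integrals cancel against the edge part of $(h|g)_{\cHg}$ found above. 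What survives is
\[
 \bigscpr({L\trace(\iota f)-\strace(\iota f)'},{\trace(\iota g)}) = \sum_{v\in V_1} h_v\overline{g_v}\,\mu_v
\]
for every $g\in D(\tau)$. Since $\trace\circ\iota\colon D(\tau)\to X$ is surjective (by affine-linear interpolation on the edges, as in the proof of Lemma~\ref{form-elem}), $\trace(\iota g)$ ranges over all of $X$. Testing with $\xi\in X_0$ kills the right-hand side and yields $(L\trace(\iota f)-\strace(\iota f)',\xi)=0$ for all $\xi\in X_0$, i.e.~$Q_0\strace(\iota f)'=Q_0L\trace(\iota f)$. Testing with $\xi=\xi^v$ (which satisfies $\xi^v(w)=\delta_{vw}$ on $V_1$) isolates $h_v\mu_v$ on the right and gives the vertex formula for $(Hf)_v$. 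The converse is obtained by reading the same calculation in reverse: given $f$ satisfying all three conditions, the integration-by-parts identity assembles exactly $\tau(f,g)=(h|g)_{\cHg}$ with $h$ defined by the edge- and vertex-formulas.

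The main technical point is the bookkeeping in the boundary step: keeping the signs and one-sided limits straight in the BV-integration by parts, and observing that under the canonical embedding $\K^{E_1'}\sse\KEV$ the vector $\strace(\iota f)'$ has vanishing $V_1$-components in the $\KEV$ scalar product, so the two sides of the collapsed identity align correctly. One should additionally verify that the resulting vertex formula does not depend on the particular choice of $\xi^v$; this follows from the $X_0$-orthogonality just derived, since two admissible choices differ by an element of $X_0$.
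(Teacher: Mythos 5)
Your proposal is correct and follows essentially the same route as the paper's proof: you test the defining identity of the associated operator against compactly supported edge functions, then against traces in $X_0$ and the vectors $\xi^v$ after the edgewise integration by parts, and reverse the computation for the converse (which implicitly uses the decomposition $\trace(\iota g)=\xi_0+\sum_{v\in V_1}g_v\xi^v$ with $\xi_0\in X_0$, exactly as in the paper). The only difference is that you derive the edge part $(f_e)_{e\in E_1}\in D(\hat H)$ directly, where the paper cites the corresponding argument from \cite{kkvw09}.
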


\begin{proof}
 (i) A preliminary step: Let $f\in D(\hat H)$, $g\in D(\tau)$.
For all $e\in E_1$ one has
\begin{align*}
\int_{a_e}^{b_e}(\iota_ef_e)'(x)\ol{(\iota_eg_e)'(x)}\,dx
=&-\int_{(a_e,b_e)}\partial_{\mu_e}(\iota_ef_e)'(x)\ol{g_e(x)}\,
d\mu_e(x)\\
&+ (\iota_ef_e)'(b_e\llim)\ol{\iota_eg_e(b_e)}
- (\iota_ef_e)'(a_e\rlim)\ol{\iota_eg_e(a_e)};
\end{align*}
cf. \cite[equ.\,(1.2)]{kkvw09}. Summing this equation over all the edges in
$E_1$ we obtain
\begin{align*}
\sum_{e\in E_1}\int_{a_e}^{b_e}(\iota_ef_e)'(x)\ol{(\iota_eg_e)'(x)}\,dx
=\scpr(\hat Hf,g)_\cHe - \bigscpr(\strace((\iota_ef_e)')_{e\in
E_1},\trace{(\iota
g)})_{\K^{E_1'}}.
\end{align*}

(ii) Let $f\in D(H)$, $g\in D(\tau)$. From $D(H)\sse D(\tau)$ we conclude that
$\trace(\iota f)\in X$. As in \cite[proof of Theorem 1.9]{kkvw09} one obtains
that $(f_e)_{e\in E_1}\in D(\hat H)$, $\hat H(f_e)_{e\in
E_1}=\big((Hf)_e\big)_{e\in E_1}$. Using part (i) above we obtain
\begin{align*}
 &\scpr(Hf,g)_{\cHg}\\
 &=-\sum_{e\in E_1}\int_{(a_e,b_e)}\partial_{\mu_e}(\iota_ef_e)'(x)
          \ol{g_e(x)}\,d\mu_e(x)+ \sum_{v\in V_1}(Hf)_v\ol{g_v}\,\mu_v\\
 &=\sum_{e\in E_1}\int_{a_e}^{b_e}(\iota_ef_e)'(x)
\ol{(\iota_eg_e)'(x)}\,dx
+ \bigscpr(\strace(\iota f)',\trace{(\iota g)})_{\K^{E_1'}}
+ \sum_{v\in V_1}(Hf)_v\ol{g_v}\,\mu_v.
\end{align*}
Because of
\begin{align*}
 \scpr(Hf,g)_{\cHg}=\sum_{e\in E_1}\int_{a_e}^{b_e}(\iota_e f_e)'(x)
\ol{(\iota_e g_e)'(x)}\,dx
+\bigscpr(L\trace(\iota f),\trace(\iota g{)})
\end{align*}
we therefore obtain
\begin{align}\label{hhh}
\sum_{v\in V_1}(Hf)_v\ol{g_v}\,\mu_v=
\bigscpr(L\trace(\iota f)-\strace(\iota f)',\trace(\iota g{)}).
\end{align}

For $\xi\in X_0$ choose $g\in D(\tau)$ satisfying $\trace(\iota g)=\xi$,
and $g$ affine linear on the edges $e\in E_1$. Then equation \eqref{hhh} implies
\[
 0=\bigscpr(L\trace(\iota f)-\strace(\iota f)',\xi).
\]
This shows that $Q_0L\trace(\iota f)=Q_0\strace(\iota f)'$.

Let $v\in V_1$, and choose $g\in D(\tau)$ satisfying $\trace(\iota g)=\xi^v$,
and
$g$ affine linear on the edges $e\in E_1$. Then equation \eqref{hhh} yields
\begin{align*}
(Hf)_v\,\mu_v=\sum_{w\in V_1}(Hf)_w\ol{\xi^v(w)}\,\mu_w
             =\bigscpr(L\trace(\iota f)-\strace(\iota f)',\xi^v).
\end{align*}
This shows the second part of the formula for $Hf$.

(iii) Now let $\tilde H$ denote the operator indicated on the right hand side of
the assertion, and let $f\in D(\tilde H)$. Then $f\in D(\tau)$. Let $g\in
D(\tau)$. Then $\xi:=\trace(\iota g)-\sum_{v\in V_1}g_v\xi^v\in X_0$, and
therefore
\[
\bigscpr(L\trace(\iota f)-\strace(\iota f)',\xi)=\bigscpr(Q_0(L\trace(\iota
f)-\strace(\iota f)'),\xi)=0.
\]

Using part (i) as well as the previous equality we obtain
\begin{align*}
 &\scpr(\tilde Hf,g)_\cHg\\
 &=\bigscpr(\hat H(f_e)_{e\in E_1},{(g_e)}_{e\in E_1})_\cHe
   +\sum_{v\in V_1}\frac1{\mu_v}\scpr(L\trace(\iota f)-\strace(\iota f)',
                                       \xi^v)\ol{g_v}\,\mu_v\\
 &=\sum_{e\in E_1}\int_{a_e}^{b_e}(\iota_ef_e)'(x)\ol{(\iota_eg_e)'(x)}\,dx
+\bigscpr(\strace(\iota f)',\trace{(\iota g)})\\
&\phantom{=\bigscpr(\hat H(f_e)_{e\in E_1},{(g_e)}_{e\in
E_1})_\cHe}+\scpr(L\trace(\iota f)-\strace(\iota f)',\trace{(\iota
g)})\\
&=\tau(f,g).
\end{align*}
The definition of $H$ then yields that $f\in D(H)$ and $Hf=\tilde Hf$.
\end{proof}

\begin{remarks}
(a) For $f\in D(H)$ and $v\in V_1$, the expression given for $(Hf)_v$ given in
Theorem~\ref{thm-operator} does not depend on the choice of $\xi^v$.

(b) The case of a weight $\mu_v>0$ at a vertex leads to a case of Wentzell
boundary condition at $v$. The expression of $(Hf)_v$ in
Theorem~\ref{thm-operator} generalises the expression obtained at a boundary
point in the case of a single interval; cf.\ \cite[Poposition~4.3]{vovo03}.
\end{remarks}

\section{Positivity and contractivity}
\label{pos-con}

In this section we indicate conditions for the $C_0$-semigroup
$(e^{-tH})_{t\ge0}$ to be positive or submarkovian. We assume that the
hypotheses are as in Section \ref{singDir} and that \eqref{dense} holds.

In the following we need the notion of a (Stonean) sublattice of $\K^n$. We
consider $\K^n$ as the function space $C(\{1,\dots,n\})$, and accordingly use
the notation
$|x|=(|x_1|,\dots,|x_n|)$, for $x\in\K^n$, and $x\wedge y=(x_1\wedge y_1,\dots,
x_n\wedge y_n)$, for $x,y\in\R^n$. A \emph{sublattice} $X$ of $\K^n$ is a
subspace for which $x\in X$ implies that $|x|\in X$. A sublattice $X$ is called 
\emph{Stonean} if additionally 
$x\wedge\indic\in X$ for all real $x\in X$.

We refer to \cite[Appendix]{kkvw09} for the description
of (Stonean) sublattices of $\K^n$ and 
of generators for positive (submarkovian) $C_0$-semigroups
on these sublattices.

\begin{theorem}\label{pos-subm}
{\rm (a)} Assume that $X$ is a sublattice of $\KEV$ and that the semigroup
$(e^{-tL})_{t\ge0}$ is positivity preserving. Then $(e^{-tH})_{t\ge0}$ is
positivity preserving.

{\rm (b)} Assume that $X$ is a Stonean sublattice of $\KEV$ and that the
semigroup $(e^{-tL})_{t\ge0}$ is a submarkovian semigroup. Then
$(e^{-tH})_{t\ge0}$ is submarkovian.
\end{theorem}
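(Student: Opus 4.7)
The plan is to apply the Beurling-Deny criteria to the closed semibounded form $\tau$: part~(a) reduces to showing that $D(\tau)$ is stable under $f\mapsto|f|$ with $\tau(|f|)\leq\tau(f)$, and~(b) additionally asks the same for the truncation $f\mapsto f\wedge\1$ of real $f$. Via the finite-dimensional characterisation recalled in \cite[Appendix]{kkvw09}, the hypothesis on $(e^{-tL})_{t\geq 0}$ translates into $X$ being a (Stonean) sublattice of $\KEV$ together with the form inequalities $\scpr(L|\xi|,|\xi|)\leq\scpr(L\xi,\xi)$ (and $\scpr(L(\xi\wedge\1),\xi\wedge\1)\leq\scpr(L\xi,\xi)$ in the Stonean case) for all (real) $\xi\in X$. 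Applied with $\xi=\trace(\iota f)\in X$, this gives exactly the required estimate for the boundary summand of $\tau$.

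The key technical step is the identity
\[
\trace(\iota|f|) = |\trace(\iota f)|,\qquad \trace(\iota(f\wedge\1)) = \trace(\iota f)\wedge\1\ \ (f \text{ real}),
\]
valid for $f\in D(\iota)$. On $V_1$ it holds by definition; on $E_1'$ it follows because $\iota_e(|f_e|)$ and $|\iota_e f_e|$ are continuous functions on $[a_e,b_e]$ agreeing $\mu_e$-a.e., hence also on $\spt\mu_e$, a set that contains $a_e$ and $b_e$. Combined with the (Stonean) sublattice property of $X$, this immediately yields $|f|\in D(\tau)$ (resp.\ $f\wedge\1\in D(\tau)$). For the edge integrals one must then prove
\[
\int_{a_e}^{b_e}\bigl|(\iota_e|f_e|)'\bigr|^2\,dx\leq\int_{a_e}^{b_e}|(\iota_e f_e)'|^2\,dx
\]
and the $\wedge\1$ analogue, which is the one-dimensional singular Beurling-Deny estimate, also implicit in \cite[Section 1]{kkvw09}. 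I would argue component by component: on each component $(\alpha,\beta)$ of $[a_e,b_e]\sms\spt\mu_e$ both functions are affine and the endpoint inequality $(|u(\beta)|-|u(\alpha)|)^2\leq(u(\beta)-u(\alpha))^2$ suffices, while on $\spt\mu_e$ two $W^1_2$ functions coinciding on a measurable set have equal derivatives a.e.\ there, reducing the estimate to the pointwise bound $\bigl|(|\iota_e f_e|)'\bigr|\leq|(\iota_e f_e)'|$.

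The principal obstacle is the trace identity itself: one is tempted to identify $\iota_e(|f_e|)$ with $|\iota_e f_e|$ globally, but the latter generally develops kinks on complementary components of $\spt\mu_e$ where $\iota_e f_e$ changes sign and then leaves $W^1_{2,\mu_e}$, so only the common behaviour on $\spt\mu_e$ — which is exactly what $\trace$ extracts — is available. With the trace identity and the edge estimate in place, part~(b) runs parallel to part~(a), with $|\cdot|$ replaced by $\cdot\wedge\1$ throughout and the sublattice hypothesis strengthened to Stonean.
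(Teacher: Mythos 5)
Your proposal is correct and follows essentially the same route as the paper: an application of the Beurling--Deny criteria, reduction of the boundary term to the (Stonean) sublattice property of $X$ and the corresponding form inequality for $L$ from \cite[Appendix]{kkvw09}, the one-dimensional estimate for the differential part, and --- as the key observation, which you identify exactly as the paper does --- the trace identity $\trace(\iota|f|)=|\trace(\iota f)|$, justified by the fact that $\iota_e|f_e|$ and $|\iota_e f_e|$ agree on $\spt\mu_e\ni a_e,b_e$ even though they need not agree globally. The only difference is that you spell out details (the componentwise edge estimate) that the paper delegates to \cite[Theorem 1.7]{kkvw09}.
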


This result was proved for the case of local boundary conditions
(cf.~Section~\ref{lbc}) and no vertex masses in \cite[Theorem
3.5]{kkvw09}, and for the case of vertices with masses and local boundary
conditions of continuity (cf.~Example~\ref{lastex}) in 
\cite[Theorem 4.2]{kkvw09}. Its proof is completely analogous to 
\cite[proof of Theorem 3.5]{kkvw09}; so we refrain from giving a complete proof
but rather only  mention the main ingredients. 
The proof 
consists in an application of the
Beurling-Deny criteria
(cf.~\cite[Corollary 2.18]{ouh05}; see also \cite[Remarks 1.6]{kkvw09}).
So, in order to prove part (a), it is equivalent to
prove that the normal contraction $f\mapsto |f|$ acts on $D(\tau)$, and that
$\tau(|f|)\le\tau(f)$ for all $f\in D(\tau)$. That the inequality works on the
differential part is a one-dimensional issue which is taken care of in
\cite[Theorem 1.7]{kkvw09}. For the trace part, the main observation is the
equation $\trace \iota |f|=|\trace\iota f|$.
This is less obvious than it might
appear at the first glance since, in general, one does not have
$\iota|f|=|\iota f|$. However, this equality holds on $\spt\mu_e$, and
therefore 
at 
the end points of the intervals $[a_e,b_e]$, for all
$e\in E_1$. The reasoning for part (b) is analogous.

\section{Local boundary conditions}
\label{lbc}

So far, the structure of the graph did not enter the considerations; in fact
the function $\vtx$ linking the edges to the vertices was not used at all. In
order to explain what we understand by local boundary conditions, we need the
following definitions.

For $v\in V$, the sets
\[
 E_{1,v,j}:=\set{e\in E_1}{\vtx_j(e)=v}\qquad (j=0,1)
\]
describe the sets of all edges having mass and starting or ending at $v$,
respectively, and the set
\[
 E_{1,v}:=\big(E_{1,v,0}\times\{0\}\big)\cup\big(E_{1,v,1}\times\{1\}\big)
\]
is the set of all edges having mass connected with $v$ (and where loops starting
and ending
at $v$ yield two contributions). Note that then $E_1'=\bigcup_{v\in V}E_{1,v}$.

Recall that the boundary conditions are specified by the choice of a subspace
$X\sse\KEV$ and a self-adjoint operator $L$ in $X$.
The boundary conditions will be called \emph{local} if for each $v\in V$ there
exists a subspace
\begin{align*}
 X_v\sse\K^{E_{1,v}}\quad\text{if }v\in V_0,\qquad
 X_v\sse\K^{E_{1,v}\cup\{v\}}\quad\text{if }v\in V_1,
\end{align*}
and a selfadjoint operator $L_v$ in $X_v$, such that
\[
 X=\bigoplus_{v\in V}X_v,\qquad L=\bigoplus_{v\in V}L_v.
\]

For $v\in V$, we define the ``local trace mapping''
\begin{align*}
\trace_v\colon\prod_{e\in E_1}C[a_e,b_e]\times\K^{V_1} \to
\begin{cases}
 \K^{E_{1,v}}    &\text{if } v\in V_0,\\
 \K^{E_{1,v}\cup\{v\}}  &\text{if } v\in V_1
\end{cases}
\end{align*} 
by 
\[
\trace_vf:=
\begin{cases}
\trace f\restrict{E_{1,v}}&\text{if }v\in V_0,\\
\trace f\restrict{E_{1,v}\cup\{v\}}&\text{if }v\in V_1. 
\end{cases}
\]
Then for the form $\tau$ we obtain
\begin{align*}
D(\tau)&=\set{f\in D(\iota)}{\trace_v(\iota f)\in X_v\ (v\in V)},\\
\tau(f,g)&=\sum_{e\in
E_1}\int_{a_e}^{b_e}(\iota_ef_e)'(x)\ol{(\iota_eg_e)'(x)}\,dx
           +\sum_{v\in V}\bigscpr(L_v\trace_v(\iota f),{\trace_v(\iota g)}). 
\end{align*}
With
\[
 X_{v,0}:=
\begin{cases}
 X_v &\text{if }v\in V_0,\\
 \sset{\xi\in X_v}{\xi(v)=0} &\text{if }v\in V_1, 
\end{cases}
\]
the condition \eqref{dense} for $D(\tau)$ to be dense then decomposes into
\begin{align*}
 X_{v,0}\ne X_v\qquad(v\in V_1),
\end{align*}
or expressed differently,
for all $v\in V_1$ there exists $\xi^v\in X_v$ such that $\xi^v(v)=1$.

It is an easy task to translate the description of the associated operator $H$,
given in Theorem \ref{thm-operator}, to the present case of local boundary
conditions, as follows.

\begin{corollary}\label{cor-local1}
The operator $H$ associated with $\tau$ is given by
\begin{align*}
 &D(H)=\set{f\in\cHg}{(f_e)_{e\in E_1}\in D(\hat H),\ \trace_v(\iota f)\in
X_v,\\
 &\hphantom{D(H)=f\in\cHg{(f_e)_{e\in E_1}}}
Q_{v,0}\strace_v(\iota f)'=Q_{v,0}L_v
\trace_v(\iota f)\ (v\in V)},\\
 &((Hf)_e)_{e\in E_1}=\hat H(f_e)_{e\in E_1},\\
 &(Hf)_v=\frac1{\mu_v}\bigscpr(L_v\trace_v(\iota f)-\strace_v(\iota f)',\xi^v)
\qquad(v\in V_1).
\end{align*}
\end{corollary}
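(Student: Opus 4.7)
The plan is to reduce Corollary~\ref{cor-local1} directly to Theorem~\ref{thm-operator} by exploiting the direct-sum structure imposed by the locality assumption. The key observation is that the index set $E_1'\cup V_1$ decomposes as the disjoint union of the sets $E_{1,v}\cup(\{v\}\cap V_1)$ indexed by $v\in V$, so that $\KEV=\bigoplus_{v\in V}\K^{E_{1,v}\cup(\{v\}\cap V_1)}$ as an orthogonal direct sum with respect to the scalar product on $\KEV$ (the vertex weights $\mu_v$ live in disjoint slots). Hence the hypothesis $X=\bigoplus_{v\in V}X_v$ is automatically an orthogonal decomposition.

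First I would check that $\trace(\iota f)\in X$ is equivalent to $\trace_v(\iota f)\in X_v$ for every $v\in V$: indeed, $\trace(\iota f)$ splits into components $\trace_v(\iota f)$ living precisely in the slots assigned to $v$, and orthogonal sums allow coordinate-wise verification. Similarly, with the definition of $X_{v,0}$ given in the paragraph preceding the corollary, one has $X_0=X\cap\K^{E_1'}=\bigoplus_{v\in V}X_{v,0}$, again as an orthogonal direct sum. Consequently the orthogonal projection $Q_0$ in Theorem~\ref{thm-operator} decomposes as $Q_0=\bigoplus_{v\in V}Q_{v,0}$. Since $L=\bigoplus_v L_v$ by assumption and $\strace(\iota f)'$ decomposes into its $v$-components $\strace_v(\iota f)'$, the single global boundary condition
\[
Q_0\strace(\iota f)'=Q_0L\trace(\iota f)
\]
from Theorem~\ref{thm-operator} is equivalent to the family of local conditions
\[
Q_{v,0}\strace_v(\iota f)'=Q_{v,0}L_v\trace_v(\iota f)\qquad(v\in V).
\]

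For the vertex-part of $Hf$, I would use the freedom in the choice of $\xi^v$. Since the condition \eqref{dense} localises (as remarked in the passage after its local reformulation) to $X_{v,0}\neq X_v$ for each $v\in V_1$, one may pick $\xi^v\in X_v\subseteq X$ with $\xi^v(v)=1$; by construction such $\xi^v$ is supported only on the $v$-slot. Then $L\xi^v=L_v\xi^v$, and the inner product in the expression for $(Hf)_v$ from Theorem~\ref{thm-operator} involves only the $v$-components of $L\trace(\iota f)-\strace(\iota f)'$, so that it equals $\bigscpr(L_v\trace_v(\iota f)-\strace_v(\iota f)',\xi^v)$. The formula for the edge-part of $Hf$ is unchanged, as $\hat H$ makes no reference to the graph structure.

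The main (and essentially only) obstacle is purely bookkeeping: making sure that the orthogonality of the decomposition $X=\bigoplus_v X_v$ with respect to the weighted scalar product on $\KEV$ is correctly exploited so that the projection $Q_0$ really does split. Once this is verified, the corollary follows simply by rewriting the three ingredients of Theorem~\ref{thm-operator} ($\trace\in X$, the projected identity, and the vertex formula) in terms of their local pieces.
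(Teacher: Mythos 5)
Your argument is correct and takes exactly the route the paper intends: the paper offers no proof of Corollary~\ref{cor-local1} beyond the remark that it is ``an easy task to translate'' Theorem~\ref{thm-operator}, and your blockwise decomposition of $\KEV$, $X$, $X_0$, $Q_0$ and $L$ over the disjoint index sets $E_{1,v}\cup(\{v\}\cap V_1)$, together with choosing $\xi^v$ supported in the $v$-block (legitimate by Remark following Theorem~\ref{thm-operator}, since $(Hf)_v$ is independent of that choice), is precisely that translation carried out in full. The one point you rightly single out --- that the weighted scalar product on $\KEV$ is diagonal across these blocks, so the direct sums are orthogonal and $Q_0$ splits --- does hold and is indeed the only thing that needs checking.
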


Here, for $v\in V$ the mapping $\strace_v\colon\prod_{e\in E_1}\bv(a_e,b_e)
\to \K^{E_{1,v}}$ is defined by $\strace_vf:=(\strace f)\restrict{E_{1,v}}$, 
and $Q_{v,0}$ is the orthogonal
projection onto $X_{v,0}$ in $\K^{E_{1,v}}$, for $v\in V_0$, or
in $\K^{E_{1,v}\cup\{v\}}$, for $v\in V_1$.
We will not put down further details here. Similarly, the conditions
for $(e^{-tH})_{t\ge0}$ to be positive and submarkovian, Theorem \ref{pos-subm},
can be spelled out in terms of the spaces $X_v$ and the operators $L_v$. 
The
statements are then analogous to \cite[Theorem 3.5]{kkvw09}, where the
case that $E=E_1$ and $V=V_0$ is treated.

\begin{corollary}\label{cor-local2}
{\rm (a)} Assume that $X_v$ is a sublattice of $\K^{E_{1,v}}$ ($v\in V_0$)
or $\K^{E_{1,v}\cup\{v\}}$ ($v\in V_1$)
and that $(e^{-tL_v})_{t\geq0}$ positivity preserving, for all $v\in V$. 
Then $(e^{-tH})_{t\geq0}$ is a positivitiy preserving 
$C_0$-semigroup on $\cHg$.

{\rm (b)} Assume that $X_v$ is a Stonean sublattice of $\K^{E_{1,v}}$ ($v\in
V_0$) 
or $\K^{E_{1,v}\cup\{v\}}$ ($v\in V_1$) and that $(e^{-tL_v})_{t\geq0}$ is a submarkovian 
$C_0$-semigroup on $X_v$, for all $v\in V$. Then $(e^{-tH})_{t\geq0}$ is a 
submarkovian $C_0$-semigroup on $\cHg$.
\end{corollary}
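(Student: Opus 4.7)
The strategy is to reduce directly to Theorem~\ref{pos-subm}; everything hinges on transferring the pointwise hypotheses on the blocks $X_v, L_v$ to the global data $X, L$, after which Theorem~\ref{pos-subm} applies verbatim. The key observation is that the decomposition $X = \bigoplus_{v\in V} X_v$ and $L = \bigoplus_{v\in V} L_v$ is along the partition of coordinates
\[
 E_1'\cup V_1 = \bigcup_{v\in V_0} E_{1,v}\;\cup\;\bigcup_{v\in V_1}\bigl(E_{1,v}\cup\{v\}\bigr),
\]
so the constituent subspaces live in mutually orthogonal coordinate blocks of $\K^{E_1'\cup V_1}$.

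First, I would verify that $X$ is a sublattice (respectively a Stonean sublattice) of $\K^{E_1'\cup V_1}$. Because the operations $\xi\mapsto|\xi|$ and $\xi\mapsto \xi\wedge\indic$ act coordinatewise, and because the blocks $E_{1,v}$ (for $v\in V_0$) and $E_{1,v}\cup\{v\}$ (for $v\in V_1$) form a partition of $E_1'\cup V_1$, these lattice operations respect the direct sum decomposition: for $\xi=\sum_v \xi_v$ with $\xi_v\in X_v$, one has $|\xi|=\sum_v |\xi_v|$ and similarly for $\xi\wedge\indic$. Hence the lattice (respectively Stonean lattice) property of each $X_v$ in its ambient coordinate space lifts to the corresponding property of $X$ in $\K^{E_1'\cup V_1}$.

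Next, the semigroup $(e^{-tL})_{t\ge0}$ factors as the direct sum $\bigoplus_{v\in V}(e^{-tL_v})_{t\ge0}$. Positivity preservation is obviously closed under direct sums, and the same holds for the submarkovian property, since the order unit $\indic$ on $X$ decomposes as the sum of the order units on the $X_v$ and $e^{-tL}\xi\le\indic$ componentwise iff $e^{-tL_v}\xi_v\le\indic_v$ for every $v$. Thus the hypotheses of Theorem~\ref{pos-subm}(a), respectively (b), are satisfied.

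Applying Theorem~\ref{pos-subm} then yields parts (a) and (b) of the Corollary. There is no substantive obstacle; the whole content is the compatibility of the lattice operations and of positivity/submarkovianity with the orthogonal direct sum decomposition dictated by the local structure, which is immediate once one has written down the partition of $E_1'\cup V_1$ above.
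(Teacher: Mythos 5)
Your proof is correct and takes essentially the same route the paper intends: the paper gives no separate argument for this corollary, merely remarking that Theorem~\ref{pos-subm} ``can be spelled out'' in terms of the $X_v$ and $L_v$. Your verification that the coordinatewise lattice operations $\xi\mapsto|\xi|$, $\xi\mapsto\xi\wedge\indic$ and the positivity/submarkovian properties of $e^{-tL}=\bigoplus_v e^{-tL_v}$ are compatible with the block decomposition of $E_1'\cup V_1$ is exactly that spelling out, so Theorem~\ref{pos-subm} applies.
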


\begin{example}[local boundary conditions of continuity]\label{lastex}
This special case of local boundary conditions was studied in 
\cite[Section 4]{kkvw09}. In our framework, this example reads as follows.
Let $X_v = \lin 
\{\1\}$, $L_v\in\R$, $l_v:= \scpr(L_v \1,\1)$ for $v\in V$. 
Then $X_{v,0} = \{0\}$ for $v\in V_1$ 
(which makes it clear that condition \eqref{dense} is satisfied) and hence
\[
Q_{v,0} = 
\begin{cases}
 \scpr(\cdot,\1)\1 
 &\text{if }v\in V_0,\\
 0 &\text{if }v\in V_1.
\end{cases}
\]
Functions $f\in D(\tau)$ are continuous
on $\Gamma$,
i.e., for $v\in V_1$ we 
have $f(v) = (\trace_v f)(e,j)$ for all $(e,j)\in E_{1,v}$, 
and for $v\in V_0$ 
there exists 
$a_v(f)\in \K$ such that $a_v(f) = (\trace_v f)(e,j)$ for all 
$(e,j)\in E_{1,v}$ (note that we cannot write $f(v)$ in this case since $f$ 
is not defined on $V_0$).
The 
second part of the 
boundary conditions for $f\in D(H)$ translates to
\[
\sum_{e\in E_{1,v,0}} f_e'(a_e\rlim) - \sum_{e\in E_{1,v,1}} f_e'(b_e\llim) = 
l_v a_v(f) \quad(v\in V_0);
\]
see also \cite[Theorem 4.3]{kkvw09}. 
In the setup considered in \cite{kuc04}, these boundary conditions are called 
$\delta$-type conditions; cf.~\cite[Section 3.2.1]{kuc04}.
\end{example}

{\frenchspacing

}

\bigskip
\noindent
Christian Seifert\\
Fakult\"at Mathematik\\
Technische Universit\"at Chemnitz\\
09107 Chemnitz, Germany \\
{\tt christian.seifert@mathematik.tu-chemnitz.de}
\\[3ex]
J\"urgen Voigt\\
Fachrichtung Mathematik\\
Technische Universit\"at Dresden\\
01062 Dresden, Germany\\
{\tt juergen.voigt@tu-dresden.de}
\end{document}